\newcommand*\circled[1]{\tikz[baseline=(char.base)]{
		\node[shape=circle,draw,inner sep=0.5pt] (char) {#1};}}
\def\BibTeX{{\rm B\kern-.05em{\sc i\kern-.025em b}\kern-.08em
    T\kern-.1667em\lower.7ex\hbox{E}\kern-.125emX}}
\newtheorem{thm}{Theorem}[section]
\newtheorem{cor}[thm]{Corollary}
\newtheorem{lem}[thm]{Lemma}
\newtheorem{prop}[thm]{Proposition}
\theoremstyle{definition}
\newtheorem{defn}{Definition}[section]
\theoremstyle{remark}
\newtheorem{rem}{Remark}[section]
\numberwithin{equation}{section}
\begin{document}

\title[Stability of the Heisenberg Product on Symmetric Functions] 
{Stability of the Heisenberg Product on Symmetric Functions}

%
\author{Li Ying}
\address{Li Ying, Department of Mathematics, Texas A\&M University, College Station, Texas, 77843, USA}
\email[]{98yingli@math.tamu.edu}
\urladdr{http://www.math.tamu.edu/~98yingli}
\thanks{}

\date{}

\subjclass[2010]{05E05, 20C30}

\begin{abstract}
The Heisenberg product is an associative product defined on symmetric functions which interpolates between the usual product and the Kronecker product. In 1938, Murnaghan discovered that the Kronecker product of two Schur functions stabilizes. We prove an analogous result for the Heisenberg product of Schur functions.
\end{abstract}

\keywords{Heisenberg product, Kronecker product, Schur function}

\maketitle

\section{Introduction}
\label{intro}
Aguiar, Ferrer Santos, and Moreira introduced a new product, the Heisenberg product, on symmetric functions (also on representations of symmetric group) in \cite{M-2015} and \cite{Moreira}. Unlike the ordinary product and the Kronecker product, the terms appearing in the Heisenberg product of two Schur functions have different degrees. The highest degree component is the usual product. When the Schur functions have the same degree, the lowest degree component of the Heisenberg product is their Kronecker product. 

In 1938, Murnaghan \cite{F-1938} found that the Kronecker product of two Schur functions stabilizes in the following sense. Given a partition $\lambda$ of $l$ and a large integer $n$, let $\lambda[n]$ be the partition of $n$ by prepending a part of size $n-l$ to $\lambda$. Given two partitions $\lambda$ and $\mu$, the coefficients appearing in the Schur expansion of the Kronecker product $s_{\lambda[n]}*s_{\mu[n]}$ do not depend upon $n$ when $n$ is large enough. The aim of this paper is to show that each degree component of the Heisenberg product also has this property.

The paper is organized as follows. In the second section, we first give the definitions of the induction product, the Kronecker product, and the Heisenberg product, and recall some important results. At the end of this section, we state the main result of this paper, which says that each degree component of the Heisenberg product has the similar stabilization property as the Kronecker product. Section \ref{example} offers an example of this stabilization. In the fourth section, we prove the main theorem. In the last section, we define the stable Heisenberg coefficients, and show how to recover the usual Heisenberg coefficients from the stable ones which generalizes an analogue formula for the Kronecker coefficients in \cite{E-2010}.

\section{Preliminaries}
\label{Preliminaries}
We begin by defining the induction product on (complex) representations of symmetric groups (we work with complex representations throughout the paper). For an introduction to representations of symmetric groups, see \cite{Sagan}. Let $V$ and $W$ be representations of $S_n$ and $S_m$ respectively. Observe that the tensor product $V\otimes W$ is a representation of $S_n\times S_m$, and $S_n\times S_m$ can be naturally embedded into $S_{n+m}$. The \textit{induction product} of $V$ and $W$ is the induced representation of $V\otimes W$ from $S_n\times S_m$ to $S_{n+m}$, written as $\text{Ind}_{S_n\times S_m}^{S_{n+m}}(V\otimes W)$. For any partition $\alpha$, let $V_\alpha$ denote the irreducible representation, known as the Specht module, of $S_{|\alpha|}$ indexed by $\alpha$. Let $\lambda$, $\mu$, and $\nu$ be partitions of $n$, $m$, and $n+m$\, respectively (written as $\lambda\vdash n$, $\mu\vdash m$, and $\nu \vdash m+n$). The \textit{Littlewood-Richardson coefficient} $c_{\lambda, \mu}^\nu$ is the multiplicity of $V_\nu$ in the decomposition of $\text{Ind}_{S_n\times S_m}^{S_{n+m}}(V_{\lambda}\otimes V_{\mu})$ into irreducible representations. That is,
\begin{equation}
\label{InductionProduct}
\text{Ind}_{S_n\times S_m}^{S_{n+m}}(V_{\lambda}\otimes V_{\mu})=\bigoplus\limits_{\nu\vdash n+m} c_{\lambda, \mu}^{\nu} V_\nu.
\end{equation}

Let $\langle\,\cdot\,,\,\cdot\,\rangle$ denote the natural inner product on the representations of the finite groups in which the irreducible representations form an orthonormal basis. Applying the Frobenius Reciprocity Theorem to (\ref{InductionProduct}), we have
\begin{align*}
c_{\lambda, \mu}^{\nu}&=\langle\;\text{Ind}_{S_n\times S_m}^{S_{n+m}}(V_{\lambda}\otimes V_{\mu})\,,\; V_\nu\;\rangle_{S_{n+m}}\\
&=\langle\; V_{\lambda}\otimes V_{\mu}\,,\;\text{Res}_{S_n\times S_m}^{S_{n+m}}V_\nu\;\rangle_{S_n\times S_m}.
\end{align*}
 So
$$\text{Res}_{S_n\times S_m}^{S_{n+m}} V_{\nu}=\bigoplus\limits_{\lambda\vdash n\,,\, \mu\vdash m} c_{\lambda, \mu}^{\nu} (V_\lambda\otimes V_\mu).$$

There is a one-to-one correspondence between the irreducible representations (up to isomorphism) and the Schur functions by the Frobenius characteristic map, which sends $V_\lambda$ to the Schur function $s_\lambda$. So we could also express the induction product in terms of symmetric functions. Under this bijection, the induction product corresponds to the ordinary product (denoted by $\cdot$) on symmetric functions, i.e.
$$s_\lambda\cdot s_\mu=\sum\limits_{\nu\vdash n+m} c_{\lambda, \mu}^\nu s_\nu.$$

The Littlewood-Richardson coefficient has been well-studied, it has the following nice combinatorial description:
\begin{prop}[Littlewood-Richardson rule, \cite{Mac} Chapter 1 Section 9]
	\label{LR}
	Let $\lambda$, $\mu$, and $\nu$ be partitions. Then $c_{\lambda, \mu}^\nu$ is equal to the number of semi-standard skew Young tableaux $T$ of shape $\nu-\lambda$ and weight $\mu$ whose reverse row reading word $w(T)$ is a lattice permutation.
\end{prop}
\vskip -1mm
\noindent (See \cite{Mac} for a more thorough introduction to the above notions.)\\

The Kronecker product can also be defined in terms of representations of symmetric groups. Let $V$ and $W$ be representations of $S_n$. While the tensor product $V\otimes W$ is a representation of $S_n\times S_n$, it can also be considered as a representation of $S_n$ (by viewing $S_n$ as a subgroup of $S_n\times S_n$ through the diagonal map). Write it as $\text{Res}_{S_n}^{S_n\times S_n} (V \otimes W)$. Let $\lambda$, $\mu$, and $\nu$ be partitions of $n$. The \textit{Kronecker coefficient} $g_{\lambda, \mu}^\nu$ is the multiplicity of $V_\nu$ in the decomposition of $\text{Res}_{S_n}^{S_n\times S_n} (V_\lambda \otimes V_\mu)$ into irreducibles. That is, 
$$\text{Res}_{S_n}^{S_n\times S_n} (V_\lambda \otimes V_\mu)=\bigoplus\limits_{\nu\vdash n} g_{\lambda, \mu}^{\nu} V_\nu.$$
Using the above formula, we can define the Kronecker product (denoted by $*$) for symmetric functions:
$$s_\lambda * s_\mu=\sum\limits_{\nu\vdash n} g_{\lambda, \mu}^\nu s_\nu.$$
We will switch between the languages of representation theory and symmetric functions. 

There is some interesting general work on representation stability by Church, Ellenberg, and Farb \cite{RepStab, FIforSn, HomolStab}. In this paper, we focus on the stability phenomenon of the Kronecker product discovered by Murnaghan \cite{F-1938}. 
We introduce some notations which will be used throughout the paper. Let $\alpha$ be a finite integer sequences. Define $\alpha^+$ to be the sequence obtained from $\alpha$ by adding $1$ to the first part $\alpha^+= (\alpha_1+1, \alpha_2, \alpha_3, \dotsc)$; similarly, set $\alpha^-= (\alpha_1-1, \alpha_2, \alpha_3, \dotsc)$. Let $\overline{\alpha}=(\alpha_2, \alpha_3, \dotsc)$ be the sequence obtained from $\alpha$ by removing the first part. 
Let $\beta$ be another finite integer sequence, we set $\alpha+\beta=(\alpha_1+\beta_1, \alpha_2+\beta_2, \dotsc)$ and $\alpha-\beta=(\alpha_1-\beta_1, \alpha_2-\beta_2, \dotsc)$. 

Given an eventually constant sequence $\left\{a_n\right\}_{n\geq 0}$ with stable value $L$, and $n_0$ the smallest integer, denoted by $\text{SStab}(\left\{a_n\right\})$, such that for all $n\geq n_0$, $a_{n}=L$. We say that this sequence stabilizes when $n\geq M$ if as long as $M\geq n_0$, and the stabilization begins at $n=n_0$. 
For a sequence of symmetric functions $\left\{F_n\right\}_{n\geq 0}$, where $F_n$ has the Schur expansion $F_n=\sum\limits_\alpha a_n^\alpha s_\alpha$ (we set $a_n^\alpha=0$ if $\alpha$ is not a partition). We say the sequence $\left\{F_n\right\}$ stabilizes if for any $\alpha$ (not necessarily a partition), the sequences $\left\{a_n^{\alpha+(n)} \right\}_{n\geq 0}$ is eventually constant, and there exist $N$, such that $\text{SStab}(\left\{a_n^{\alpha+(n)}\right\})\leq N$ for all $\alpha$. Let $n_1$ be the smallest $N$ having this property, and we denote it by $\text{FStab}(\left\{F_n\right\})$. 
From the definition, we have  $n_1=\text{FStab}(\left\{F_n\right\})=\max\limits_\alpha \{\text{SStab}(\left\{a_n^{\alpha+(n)}\right\})\}$. We say the sequence of symmetric functions $\left\{F_n\right\}$ stabilizes when $n\geq M$ as long as $M\geq n_1$, and the stabilization begins at $n=n_1$.

Given a partition $\lambda=(\lambda_1, \lambda_2, \dotsc)$ and a positive integer $n$, let $\lambda[n]$ be the sequence $(n-|\lambda|,\lambda_1, \lambda_2, \dotsc)$.  When $n \geq |\lambda| + \lambda_1$, $\lambda[n]$ is a partition of $n$. The stability of the Kronecker product means that for any partitions $\lambda$ and $\mu$, the sequence of symmetric functions $\left\{s_{\lambda[n]}*s_{\mu[n]}\right\}_{n\geq 0}$ stabilizes when $n$ is large enough. This phenomenon is best shown on an example. Let $\lambda=(2)$ and $\mu=(1,1)$, we compute the Kronecker product $s_{n-2,2}*s_{n-2,1,1}$ for $n\geq 4$:
\begin{alignat*}{10}
s_{2,2}*s_{2,1,1} & =s_{3,1} & & +&s_{2,1,1} & & & && & & \\
s_{3,2}*s_{3,1,1}&=s_{4,1}&+s_{3,2}& +&2s_{3,1,1}&& +&&s_{2,2,1}&+s_{2,1,1,1}&&\\
s_{4,2}*s_{4,1,1}&=s_{5,1}&+s_{4,2}& +&2s_{4,1,1}&+s_{3,3}& +&&2s_{3,2,1}&+s_{3,1,1,1}& &+s_{2,2,1,1}\\
s_{5,2}*s_{5,1,1}&=s_{6,1}&+s_{5,2}& +&2s_{5,1,1}&+s_{4,3}&+&&2s_{4,2,1}& +s_{4,1,1,1}&+s_{3,3,1}&+s_{3,2,1,1}\\
s_{6,2}*s_{6,1,1}&=s_{7,1}&+s_{6,2}& +&2s_{6,1,1}&+s_{5,3}&+&&2s_{5,2,1}&+s_{5,1,1,1}&+s_{4,3,1}&+s_{4,2,1,1}.
\end{alignat*}
Observe that the last two equations are only different in the first part of the indexing partitions. Indeed, for $n\geq 7$, we have
\begin{alignat*}{2}
s_{n-2,2}*s_{n-2,1,1}&=s_{n-1,1}+s_{n-2,2} +2s_{n-2,1,1}+s_{n-3,3}+2s_{n-3,2,1}\\
&+s_{n-3,1,1,1}+s_{n-4,3,1}+s_{n-4,2,1,1}.
\end{alignat*}
In this example, the stabilization of the sequence $\left\{g_{(n,2),(n-2,1,1)}^{(n-3,2,1)}\right\}_{n\geq 0}$ begins at $n=6$. The sequence of symmetric functions $\left\{s_{n-2,2}*s_{n-2,1,1}\right\}_{n\geq 0}$ stabilizes when $n\geq N$ as long as $N\geq 7$, and the stabilization begins at $n=7$. 

In the above example, one can also observe that, for fixed partition $\nu$, the sequence of coefficients of $s_{\nu[n]}$ in the expansion is weakly increasing as $n$ increases. This was shown by Brion \cite{Brion} and Manivel \cite{Manivel}:
\begin{prop}
	\label{KWeaklyIncrease}
	Let $\lambda$, $\mu$, and $\nu$ be partitions. The sequence $g_{\lambda[n],\mu[n]}^{\nu[n]}$ is weakly increasing. 	
\end{prop}
The sequence $\left\{g_{\lambda[n],\mu[n]}^{\nu[n]}\right\}$ is eventually constant according to the stability of the Kronecker coefficients. Write $\overline{g}_{\lambda, \mu}^\nu$ for the stable value of this sequence and call it a \textit{reduced Kronecker coefficient}. In our example, we see that $\overline{g}_{(2), (1,1)}^{(2,1)}=2$ and $\overline{g}_{(2), (1,1)}^{(1,1,1)}=1$.  Moreover, Murnaghan \cite{F-1938} claimed that $\overline{g}_{\lambda, \mu}^\nu$ vanishes unless
$$|\lambda|\leq |\mu|+|\nu|,\hskip 5mm  |\mu|\leq |\lambda|+|\nu|,\hskip 5mm  |\nu|\leq |\lambda|+|\mu|,$$
which are triangle inequalities. When $|\nu|= |\lambda|+|\mu|$, $\overline{g}_{\lambda, \mu}^\nu$ is equal to the Littlewood-Richardson coefficient $c_{\lambda, \mu}^\nu$ \cite{F-1938}.

Briand et al.~\cite{E-2010} determined when the Kronecker product stabilizes and provide another condition for the reduced Kronecker coefficient being nonzero.

\begin{prop}[\cite{E-2010} Theorem 1.2]
	\label{Kstability}
	Let $\lambda$ and $\mu$ be partitions. The sequence of symmetric functions $\left\{s_{\lambda[n]} * s_{\mu[n]}\right\}_{n\geq0}$ stabilizes, and the stabilization begins at $n=|\lambda|+|\mu|+\lambda_1+\mu_1$.
\end{prop}

\begin{prop}[\cite{E-2010} Theorem 3.2]
	\label{Knonzero}
	Let $\lambda$ and $\mu$ be partitions, then
	$$\max\{|\nu|+\nu_1\arrowvert\, \nu \;\text{parition},\, \overline{g}_{\lambda, \mu}^\nu> 0\}=|\lambda|+|\mu|+\lambda_1+\mu_1.$$
\end{prop}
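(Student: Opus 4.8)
The plan is to prove the two inequalities ``$\geq$'' and ``$\leq$'' separately, combining the stabilization bound of the preceding proposition (\cite{E-2010}, Theorem 1.2) with Murnaghan's identification of the top-degree reduced Kronecker coefficients with Littlewood--Richardson coefficients. Write $N := |\lambda|+|\mu|+\lambda_1+\mu_1$ for the target value. For the inequality ``$\geq$'' I would simply exhibit one partition attaining $|\nu|+\nu_1 = N$ with $\overline{g}_{\lambda,\mu}^\nu>0$. The natural choice is the componentwise sum $\nu := \lambda+\mu = (\lambda_1+\mu_1,\lambda_2+\mu_2,\dotsc)$, which has $|\nu| = |\lambda|+|\mu|$ and $\nu_1 = \lambda_1+\mu_1$, so that $|\nu|+\nu_1 = N$. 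Since $|\nu| = |\lambda|+|\mu|$, Murnaghan's theorem \cite{F-1938} gives $\overline{g}_{\lambda,\mu}^\nu = c_{\lambda,\mu}^\nu$, and $c_{\lambda,\mu}^{\lambda+\mu}=1$ is a well-known consequence of the Littlewood--Richardson rule (the skew shape $(\lambda+\mu)/\lambda$ has a unique Littlewood--Richardson filling, namely the one placing the letter $i$ in every box of its $i$-th row). Hence $\overline{g}_{\lambda,\mu}^\nu = 1>0$ and the maximum is at least $N$.

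For the inequality ``$\leq$'' I would argue by contradiction, the only input being that $s_{\lambda[n]} * s_{\mu[n]}$ has stabilized for all $n \geq N$ (\cite{E-2010}, Theorem 1.2). The elementary observation driving the argument is a visibility criterion: for a fixed partition $\nu$, the Schur function $s_{\nu[n]}$ occurs in $s_{\lambda[n]} * s_{\mu[n]}$ precisely when $\nu[n] = (n-|\nu|,\nu_1,\nu_2,\dotsc)$ is an honest partition, i.e. when $n-|\nu|\geq \nu_1$, equivalently $n \geq |\nu|+\nu_1$. Suppose now, for contradiction, that some $\nu$ with $\overline{g}_{\lambda,\mu}^\nu>0$ had $|\nu|+\nu_1 > N$. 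At level $n=N$ the sequence $\nu[N]$ is not a partition, so this constituent is absent from $s_{\lambda[N]} * s_{\mu[N]}$; yet at level $n = |\nu|+\nu_1$ (which exceeds $N$) the sequence $\nu[n]$ is a valid partition and, since $n \geq N$, stabilization yields $g_{\lambda[n],\mu[n]}^{\nu[n]} = \overline{g}_{\lambda,\mu}^\nu>0$, so the constituent is present. The reduced expansion at level $N$ therefore differs from the one at level $n>N$, contradicting stabilization at $n=N$. Hence every $\nu$ with $\overline{g}_{\lambda,\mu}^\nu>0$ satisfies $|\nu|+\nu_1 \leq N$, and together with the lower bound this gives the desired equality.

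I expect the only delicate point to be the precise reading of ``stabilizes at $n=N$'': the argument requires interpreting it as the statement that the full list of constituents $\nu$ together with their multiplicities $\overline{g}_{\lambda,\mu}^\nu$ is independent of $n$ for all $n\geq N$, and then checking that a would-be constituent which fails the visibility criterion at level $N$ really does create a discrepancy with higher levels. Once this bookkeeping is fixed the contradiction is immediate, so the substance of the proof rests on the preceding proposition. A self-contained alternative for ``$\leq$'' would instead bound the first two parts of an arbitrary constituent $V_{\nu[n]}$ of the ordinary Kronecker product $V_{\lambda[n]}\otimes V_{\mu[n]}$ by means of Dvir's theorem on the shapes occurring there; I would expect that route to be more computational, and so would prefer the visibility argument above.
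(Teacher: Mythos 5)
First, a point of comparison: the paper does not prove this proposition at all --- it is imported verbatim from \cite{E-2010} --- so there is no in-paper argument to measure yours against; I can only assess your proof on its own terms. Your lower bound ``$\geq$'' is correct and is the standard argument: $\nu=\lambda+\mu$ satisfies $|\nu|+\nu_1=|\lambda|+|\mu|+\lambda_1+\mu_1$, Murnaghan's identification of the top-degree reduced Kronecker coefficients with Littlewood--Richardson coefficients gives $\overline{g}_{\lambda,\mu}^{\lambda+\mu}=c_{\lambda,\mu}^{\lambda+\mu}=1$, and this is exactly the witness the paper itself reuses in the sharpness part of the proof of Theorem 2.3 (where $\overline{\delta}=\overline{\beta}+\overline{\eta}$).

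The ``$\leq$'' half has a genuine gap, and it is precisely the ``delicate point'' you flag but do not close. Under the reading of ``stabilizes at $n=N$'' that this paper uses --- every coefficient $g_{\lambda[n],\mu[n]}^{\nu[n]}$ that actually occurs in the degree-$n$ expansion equals its reduced value --- your scenario produces no contradiction: a partition $\nu$ with $|\nu|+\nu_1>N$ is simply not indexed at level $N$ (since $\nu[N]$ is not a partition), and its appearance at level $n=|\nu|+\nu_1$ with coefficient $\overline{g}_{\lambda,\mu}^\nu$ is perfectly consistent with all \emph{visible} coefficients having reached their stable values at every level $\geq N$. The two ``reduced expansions'' you compare are expansions of symmetric functions of different degrees over different index sets, and weak stabilization makes no claim that those index sets coincide. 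Your argument does go through under the stronger reading in which the reduced symmetric function $\sum_{\tau\vdash n} g_{\lambda[n],\mu[n]}^{\tau}\, s_{\overline{\tau}}$ is literally independent of $n$ for $n\geq N$; but under that reading the inequality ``$\leq$'' is already contained in the hypothesis (no $\nu$ with $|\nu|+\nu_1>N$ can be of the form $\overline{\tau}$ for $\tau\vdash N$), so you would not be proving the statement so much as observing that it is part of what you are citing --- and in \cite{E-2010} the logical order is in fact the reverse, with this proposition feeding into the sharpness of the stabilization bound. A self-contained proof of ``$\leq$'' requires genuinely new input, e.g.\ Dvir-type bounds on the constituents of $V_{\lambda[n]}\otimes V_{\mu[n]}$ or the Littlewood-type expansion of $\overline{g}_{\lambda,\mu}^{\nu}$ in terms of Littlewood--Richardson and Kronecker coefficients, which is how \cite{E-2010} actually proceeds.
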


\noindent Proposition \ref{Knonzero} will be used later in the proof of Theorem \ref{mainresult}.

Aguiar et al.~\cite{M-2015} and Moreira ~\cite{Moreira} introduced a new (nongraded) product which interpolates between the induction product and the Kronecker product.

\begin{defn}{(Heisenberg product)} Let $V$ and $W$ be representations of $S_n$ and $S_m$ respectively. Fix an integer $l\in[\text{max}\{m,n\}, m+n]$, and let $a=l-m$, $b=n+m-l$, and $c=l-n$. We have the (commutative) diagram of inclusions (solid arrows):	
\begin{equation}
\label{IncluDiag}
	\begin{tikzcd}
	S_p\times S_q\times S_q \times S_r \arrow[r, hook] & {S_{p+q} \times S_{q+r} =S_n\times S_m} \arrow[ddl, bend right=16, dashed, "\text{Res}"'] & \hskip -6mm \color{red}{V\otimes W}\\
	&&\\
	{S_p \times S_q \times S_r} \arrow[r, hook] \arrow[uu, hook, "id_{S_p}\times \Delta_{S_q}\times id_{S_r}"] \arrow[uur, hook] \arrow[r, bend left=16, dashed, "\text{Ind}"]& {S_{p+q+r} =S_l} & \hskip -25mm \color{red}{(V \# W)_l}
	\end{tikzcd}
\end{equation}
The Heisenberg product (denoted by $\#$) of $V$ and $W$ is
\begin{equation}
\label{HeisenbergProduct}
\begin{split}
V \# W= \bigoplus\limits_{l=\text{max}(n,m)}^{n+m} (V\# W)_l,
\end{split}
\end{equation}
where the degree $l$ component is defined using the dashed arrows in the diagram:
\begin{equation}
\label{DegreeComponent}
(V\# W)_l=\text{Ind}_{S_a\times S_b\times S_c}^{S_l}\text{Res}_{S_a\times S_b\times S_c}^{S_n\times S_m} (V\otimes W).
\end{equation}
\end{defn}
\noindent When $l= m+ n$, $(V\# W)_l=\text{Ind}_{S_n\times S_m}^{S_{n+m}} (V\otimes W)$, which is the induction product of representations; when $l=n=m$, $(V\# W)_l=\text{Res}_{S_l}^{S_l\times S_l} (V\otimes W)$, which is the Kronecker product of representations. The Heisenberg product connects the induction product and the Kronecker product. Remarkably, this product is associative \cite[Theorem 2.3, Theorem 2.4, Theorem 2.6]{M-2015}. The \textit{Heisenberg coefficient} $h_{\lambda, \mu}^\nu$ is the multiplicity of $V_\nu$ in $V_\lambda\# V_\mu$, i.e.
$$V_\lambda \# V_\mu = \bigoplus\limits_{l=\text{max}(n,m)}^{n+m} \bigoplus\limits_{\nu\vdash l} h_{\lambda, \mu}^\nu V_\nu,$$
and we set $h^{\nu}_{\lambda, \mu}=0$ if $\lambda$, $\mu$, or $\nu$ is not a partition. Similar to the Kronecker product, we can use the above formula to define the Heisenberg product (also denoted by $\#$) for symmetric functions:
$$s_\lambda \# s_\mu = \sum\limits_{l=\text{max}(n,m)}^{n+m} \sum\limits_{\nu\vdash l} h_{\lambda, \mu}^\nu s_\nu.$$
By the definition of the Heisenberg product (see diagram (\ref{IncluDiag})), when $b$ is much greater than $a$ and $c$, the right hand side of (\ref{DegreeComponent}) behaves like the Kronecker product. A natural question is whether we can develop a stability result for this degree component.
\begin{thm}
	\label{mainresult}
	Given nonnegative integers $r$ and $t$ and two partitions $\lambda$ and $\mu$, the sequence of symmetric functions of $\left\{(s_{\lambda[n]}\# s_{\mu[n-r]})_{n+t}\right\}_{n\geq 0}$ stabilizes, and the stabilization begins at $n=|\lambda| +|\mu|+\lambda_1+\mu_1+ 3t+ 2r$.
\end{thm}


\section{Example of the Stability of the Heisenberg Product}
\label{example}

We give an example of the stabilization of the Heisenberg product.

Let us take $\lambda=(1,1)$, $\mu=(1)$. We check the stability of the two lowest degree components of $s_{(1,1)[n]}\# s_{(1)[n-1]}$:
\vskip 3mm
$s_{1,1,1}\# s_{1,1}=(s_{2,1,1,1}+s_{1,1,1,1,1})+\color{red}{(s_{3,1}+s_{2,2}+2s_{2,1,1}+s_{1,1,1,1})}\color{black}+\color{blue}{(s_3+s_{2,1})},$
\vskip 3mm
$s_{2,1,1}\# s_{2,1}= (s_{4,2,1}+s_{4,1,1,1}+s_{3,3,1}+s_{3,2,2}+2s_{3,2,1,1}+s_{3,1,1,1,1}+s_{2,2,2,1}+s_{2,2,1,1,1})+\\
(s_{5,1}+3s_{4,2}+4s_{4,1,1}+2s_{3,3}+8s_{3,2,1}+6s_{3,1,1,1}+3s_{2,2,2}+6s_{2,2,1,1}+4s_{2,1,1,1,1}+s_{1,1,1,1,1,1})+\\
\color{red}{(s_5+5s_{4,1}+7s_{3,2}+9s_{3,1,1}+8s_{2,2,1}+7s_{2,1,1,1}+2s_{1,1,1,1,1})}\color{black}+\color{blue}{(s_4+3s_{3,1}+2s_{2,2}+3s_{2,1,1}+}\\
{s_{1,1,1,1})}.$
\vskip 4mm
\noindent The lowest degree component $(s_{(1,1)[n]}\# s_{(1)[n-1]})_n$ for $n\geq 5$:
\vskip 3mm
$(s_{3,1,1}\# s_{3,1})_5= \color{blue}s_{5}+3s_{4,1}+4s_{3,2}+4s_{3,1,1}+4s_{2,2,1}+3s_{2,1,1,1}+s_{1,1,1,1,1},$
\vskip 3mm
$(s_{4,1,1}\# s_{4,1})_6=\color{blue} s_6+ 3s_{5,1}+ 4s_{4,2}+ 4s_{4,1,1}+ 2s_{3,3}+ 5s_{3,2,1}+ 3s_{3,1,1,1}+ s_{2,2,2}+ 2s_{2,2,1,1}+ s_{2,1,1,1,1},$
\vskip 2.7mm
$(s_{5,1,1}\# s_{5,1})_7 =\color{blue}s_7+ 3s_{6,1}+4s_{5,2}+4s_{5,1,1}+2s_{4,3}+5s_{4,2,1}+3s_{4,1,1,1}+s_{3,3,1}+s_{3,2,2}+2s_{3,2,1,1}+s_{3,1,1,1,1},$
\vskip 2.7mm
$(s_{6,1,1}\# s_{6,1})_8 =\color{blue}s_8+ 3s_{7,1}+4s_{6,2}+4s_{6,1,1}+2s_{5,3}+5s_{5,2,1}+3s_{5,1,1,1}+s_{4,3,1}+s_{4,2,2}+2s_{4,2,1,1}+s_{4,1,1,1,1},$

\hskip 2.7mm $\vdots$\\
\vskip -4mm
\noindent To ease comparison, we create a table for this:

\begin{table}[ht]
	\resizebox{12cm}{!}{
		\begin{tabular}{c|ccccccccccc}
			$n$ & \multicolumn{11}{c}{\text{coefficients in} $(s_{n-2,1,1}\#s_{n-2,1})_n$}\\\hline
			$3$& \circled{$1$} & $1$ &       &       &       &      &       &       &       &      &    \\
			$4$& $\color{red}1$ & \circled{3}   & $2$   &  $3$  &       &       &  $1$  &       &       &       &    \\
			$5$& $\color{red}1$ & $\color{red}3$   & \circled{4}   &  \circled{4}  &  $\color{red}2$  &  $\color{red}5$  & \circled{3}   &       &       &       & $\color{red}1$\\
			$6$& $\color{red}1$ & $\color{red}3$   & $\color{red}4$   &  $\color{red}4$  &  \circled{2}  &  \circled{5}  & $\color{red}3$   &       &  \circled{1}  & \circled{2}   & \circled{1}\\
			$n\geq 7$& $\color{red}1$ & $\color{red}3$   & $\color{red}4$   &  $\color{red}4$  &  $\color{red}2$  &  $\color{red}5$  & $\color{red}3$   &  \circled{1}  &  $\color{red}1$  & $\color{red}2$   & $\color{red}1$\\
	\end{tabular}}
	\vskip 2mm
	\caption{Schur expansion of $(s_{n-2,1,1}\# s_{n-2,1})_n$ for $n\geq 3$. The circled numbers are where we estimate the corresponding sequence of Heisenberg coefficients will stabilize using Corollary \ref{HeisenbergColumnStable}.}
	\label{StableTable}
\end{table}
\vskip -7mm
\noindent where the coefficients are the coefficients in the expansion in the Schur basis, of, respectively (in this order):
\vskip 0.5mm
$\indent \hskip 10mm s_n, s_{(n-1,1)}, s_{(n-2,2)}, s_{(n-2,1,1)}, s_{(n-3,3)}, s_{(n-3,2,1)},s_{(n-3,1,1,1)},\\
\indent \hskip 65mm  s_{(n-4,3,1)}, s_{(n-4,2,2)}, s_{(n-4,2,1,1)}, s_{(n-4,1,1,1,1)}.$\\
\vskip -2.5mm
We can see that when $n\geq 7$, the Schur expansion of this degree component always has the same Heisenberg coefficients in the Schur expansion, and the only difference is the first part of the indexing partitions. The stabilization of the sequence of the lowest degree components of $s_{n-2,1,1}\# s_{n-2,1}$ happens at $n=7$ (using Theorem \ref{mainresult} with $r=1$ and $t=0$, the stabilization begins at $n= 2+1+1+1+2=7$). When $n\geq 7$, we have 
\vskip -3.5mm
\begin{eqnarray}\label{lowest}
\begin{split}
(s_{n-2,1,1}&\# s_{n-2,1})_{n}=s_{n}+3s_{n-1,1}+4s_{n-2,2}+4s_{n-2,1,1}+2s_{n-3,3}\\
&+5s_{n-3,2,1}+3s_{n-3,1,1,1}+s_{n-4,3,1}+s_{n-4,2,2}+2s_{n-4,2,1,1}\\
&+s_{n-4,1,1,1,1}.
\end{split}
\end{eqnarray}
\vskip -2.3mm
\noindent From Table \ref{StableTable}, we can also see that different columns (i.e. sequences $\left\{h_{(n-2,1,1), (n-2,1)}^{\nu[n]}\right\}$ for different $\nu$) stabilize at different steps, we give an estimate for this in the next section.

We also compute the second lowest degree component $(s_{(1,1)[n]}\# s_{(1)[n-1]})_{n+1}$ for $n\geq 5$, and create a table (see Table \ref{StableTable2} on the next page) for the result, where the coefficients are the coefficients in the expansion in the Schur basis, of, respectively (in this order):\\
\vskip -5mm
\indent $s_{n+1}, s_{(n,1)}, s_{(n-1,2)}, s_{(n-1,1,1)}, s_{(n-2,3)}, s_{(n-2,2,1)}, s_{(n-2,1,1,1)}, s_{(n-3,4)}, s_{(n-3,3,1)}, s_{(n-3,2,2)},\\
\indent s_{(n-3,2,1,1)}, s_{(n-3,1,1,1,1)}, s_{(n-4,5)}, s_{(n-4,4,1)}, s_{(n-4,3,2)}, s_{(n-4,3,1,1)}, s_{(n-4,2,2,1)}, s_{(n-4,2,1,1,1)},\\ \indent s_{(n-4,1,1,1,1,1)}, s_{(n-5,5,1)}, s_{(n-5,4,2)}, s_{(n-5,4,1,1)}, s_{(n-5,3,3)}, 4s_{(n-5,3,2,1)}, s_{(n-5,3,1,1,1)}, s_{(n-5,2,2,2)},\\
\indent s_{(n-5,2,2,1,1)}, s_{(n-5,2,1,1,1,1)}.$\\

\begin{sidewaystable}
	\vskip 168mm
	\centering
	\resizebox{21cm}{6.4cm}{
	\begin{tabular}{c|ccccccccccccccccccccccccccccc}
		{\LARGE $n$} & \multicolumn{29}{c}{{\LARGE coefficients in $(s_{n-2,1,1}\#s_{n-2,1})_{n+1}$}}\\
		&&&&&&&& &&&&&&& &&&&&&& &&&&&&&\\
		&&&&&&&& &&&&&&& &&&&&&& &&&&&&&\\\hline
		&&&&&&&& &&&&&&& &&&&&&& &&&&&&&\\
		&&&&&&&& &&&&&&& &&&&&&& &&&&&&&\\
		5 && 1 & 7 & 13 & 16 & 7 & 24 & 16 & & & 7 & 13 & 7 & & & & & & & 1 & & & & & & & & &\\
		&&&&&&&& &&&&&&& &&&&&&& &&&&&&&\\
		&&&&&&&& &&&&&&& &&&&&&& &&&&&&&\\
		6 && 1 & 7 & 15 & 17 & 13 & 33 & 19 & & 17 & 16 & 26 & 10 & & & & & 8 & 7 & 2 & & & & & & & & &\\
		&&&&&&&& &&&&&&& &&&&&&& &&&&&&&\\
		&&&&&&&& &&&&&&& &&&&&&& &&&&&&&\\
		7 && 1 & 7 & 15 & 17& 15 & 34 & 19 & 6 & 26 & 18 & 29 & 10 & & & 10 & 13 & 12 & 10 & 2& & & & & & & 1 & 2 & 1\\
		&&&&&&&& &&&&&&& &&&&&&& &&&&&&&\\
		&&&&&&&& &&&&&&& &&&&&&& &&&&&&&\\
		8 && 1 & 7 & 15 & 17& 15 & 34 & 19 & 8 & 27 & 18 & 29 & 10 & & 9 & 12 & 16 & 12 & 10 & 2 & & & & 1 & 4 & 3 & 1 & 2 & 1\\
		&&&&&&& &&&&&&& &&&&&&& &&&&&&&\\
		&&&&&&&& &&&&&&& &&&&&&& &&&&&&&\\
		9 && 1 & 7 & 15 & 17& 15 & 34 & 19 & 8 & 27 & 18 & 29 & 10 & 2 & 10 & 12 & 16 & 12 & 10 & 2 & & 2 & 3 & 1 & 4 & 3 & 1 & 2 & 1\\
		&&&&&&&& &&&&&&& &&&&&&& &&&&&&&\\
		&&&&&&&& &&&&&&& &&&&&&& &&&&&&&\\
		$n\geq 10$ &&  1 & 7 & 15 & 17& 15 & 34 & 19 & 8 & 27 & 18 & 29 & 10 & 2 & 10 & 12 & 16 & 12 & 10 & 2 & 1 & 2 & 3 & 1 & 4 & 3 & 1 & 2 & 1\\
		
	\end{tabular}
}
\vskip 10mm
	\caption{ Schur expansion of $(s_{n-2,1,1}\# s_{n-2,1})_{n+1}$ for $n\geq 5$.}
	\label{StableTable2}

\end{sidewaystable}

This computation shows that the sequence of the second lowest degree components of $s_{n-2,1,1}\# s_{n-2,1}$ stabilizes at $n=10$ (using Theorem \ref{mainresult} with $r=1$ and $t=1$, the stabilization begins at $n= 2+1+1+1+3+2=10$). When $n\geq 10$, we have
\begin{eqnarray}\label{second}
\begin{split}
(s_{n-2,1,1}&\# s_{n-2,1})_{n+1}=s_{n+1}+7s_{n,1}+15s_{n-1,2}+17s_{n-1,1,1}+15s_{n-2,3}\\
&+34s_{n-2,2,1}+19s_{n-2,1,1,1}+8s_{n-3,4}+27s_{n-3,3,1}+18s_{n-3,2,2}\\&
+29s_{n-3,2,1,1}+10s_{n-3,1,1,1,1}+2s_{n-4,5}+10s_{n-4,4,1}\\&
+12s_{n-4,3,2}+16s_{n-4,3,1,1}+12s_{n-4,2,2,1}+10s_{n-4,2,1,1,1}\\&
+2s_{n-4,1,1,1,1,1}+s_{n-5,5,1}+2s_{n-5,4,2}+3s_{n-5,4,1,1}+s_{n-5,3,3}\\&
+4s_{n-5,3,2,1}+3s_{n-5,3,1,1,1}+s_{n-5,2,2,2}+2s_{n-5,2,2,1,1}\\&
+s_{n-5,2,1,1,1,1}.
\end{split}
\end{eqnarray}

\section{Proof of Theorem \ref{mainresult}}
\label{Proof of Thm}

To prove Theorem \ref{mainresult}, we first prove a stability property of the Littlewood--Richardson coefficient.
\begin{lem}
\label{Lstability}
Let $\lambda$, $\mu$ and $\nu$ be partitions with $|\nu|=|\lambda|+|\mu|$,

(1) If $\nu_1-\nu_2\geq |\lambda|$, then $c_{\lambda, \mu}^\nu=c_{\lambda, \mu^+}^{\nu^+}$.

(2) If $\mu_1-\mu_2\geq |\lambda|$, then $c_{\lambda, \mu}^\nu=c_{\lambda, \mu^+}^{\nu^+}$.

\end{lem}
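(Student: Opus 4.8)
The plan is to argue entirely with the combinatorial Littlewood--Richardson rule, writing $c_{\lambda,\mu}^\nu$ as the number of LR skew tableaux of shape $\nu/\mu$ and content $\lambda$ (those whose reverse reading word is a lattice word). This is the convenient form here: since $\lambda$ is held fixed and we only push one box onto the first rows of $\mu$ and $\nu$, the content stays equal to $\lambda$ on both sides of the claimed identity, and only the first-row positions of the skew shape change. If $\mu\not\subseteq\nu$ both coefficients vanish, and since $\mu\subseteq\nu\iff\mu^+\subseteq\nu^+$ this degenerate case needs no separate treatment; so assume $\mu\subseteq\nu$.

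The key numerical step, which is really the heart of both parts, is that each hypothesis forces $\nu_2\le\mu_1$. The skew shape $\nu/\mu$ has $|\nu|-|\mu|=|\lambda|$ cells. For (1), the first row contains $\nu_1-\mu_1\le|\lambda|\le\nu_1-\nu_2$ cells, whence $\nu_2\le\mu_1$. For (2), the second row contains $\nu_2-\mu_2\le|\lambda|\le\mu_1-\mu_2$ cells, again giving $\nu_2\le\mu_1$. Because $\nu_i\le\nu_2$ for all $i\ge2$, every cell of $\nu/\mu$ in rows $\ge2$ lies in a column $\le\nu_2\le\mu_1$, strictly to the left of the first row, which begins in column $\mu_1+1$. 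Hence the first row is a horizontal strip completely disconnected (sharing no column) from the rest of the skew shape. The same holds, a fortiori, for $\nu^+/\mu^+$: its first row begins in column $\mu_1+2$, while its lower rows form exactly the shape $\overline{\nu}/\overline{\mu}$ as before.

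With this established I would set up the bijection. A filling of $\nu/\mu$ splits into its first row, a horizontal strip of $\nu_1-\mu_1$ cells, and the filling of $\overline{\nu}/\overline{\mu}$ on rows $\ge2$, with no column interaction between the two parts; by the ballot condition the first row consists entirely of $1$'s. Shifting that first row one column to the right and leaving all lower rows untouched produces a filling of $\nu^+/\mu^+$ (whose first row again has $\nu_1-\mu_1$ cells). This map preserves entries, hence preserves content (equal to $\lambda$ on both sides, using $|\nu^+|=|\lambda|+|\mu^+|$); it preserves semistandardness, since the only constraints that could be affected are the column-strict inequalities between the first row and the rows below it, and these are vacuous in both shapes; and it preserves the reverse reading word, because one reads the first row in full before any lower row, so the horizontal position of the disconnected first row is irrelevant to the word. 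Therefore the lattice condition holds for a tableau iff it holds for its image, and the map is visibly invertible, yielding a bijection between the tableaux counted by $c_{\lambda,\mu}^\nu$ and those counted by $c_{\lambda,\mu^+}^{\nu^+}$. This proves both parts simultaneously.

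I expect the main obstacle to be one of care rather than depth: making ``disconnectedness'' precise and verifying that neither the column-strict condition nor the lattice reading-word condition is disturbed by the horizontal shift. The genuinely substantive point is the inequality $\nu_2\le\mu_1$; once it is in hand the combinatorics is routine, and the only remaining bookkeeping concerns the degenerate cases of an empty first row or of $\mu\not\subseteq\nu$.
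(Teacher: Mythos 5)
Your proof is correct and follows essentially the same route as the paper's: both reduce to the observation that the hypothesis forces the first row of $\nu/\mu$ (and of $\nu^+/\mu^+$) to be disconnected from the lower rows, and then shift that row one column to obtain a content-, semistandardness-, and reading-word-preserving bijection between the two sets of Littlewood--Richardson tableaux. Your explicit derivation of $\nu_2\le\mu_1$ in both cases is in fact more detailed than the paper's, which merely asserts the disconnectedness.
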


\begin{proof}
 By Proposition \ref{LR}, $c_{\alpha, \beta}^\gamma$ ($\alpha$, $\beta$, and $\gamma$ are partitions) counts the number of semi-standard skew tableaux of shape $\gamma/\beta$ and weight $\alpha$ whose row reading word is a lattice permutation. Let $T_{\alpha, \beta}^\gamma$ be the set of these tableaux. We show that $|T_{\lambda, \mu}^\nu|=|T_{\lambda, \mu^+}^{\nu^+}|$.

Note that $T_{\lambda, \mu}^\nu= \emptyset$ unless $\mu \subset \nu$, and $\mu \subset \nu$ if and only if $\mu^+ \subset \nu^+$, hence it is enough to consider the case $\mu \subset \nu$. The skew diagrams $\nu/\mu$ and ${\nu^+}/{\mu^+}$ differ only by a shift of the first row. Since $\nu_1-\nu_2\geq |\lambda|$, the first row (may be empty) of $\nu/\mu$ is disconnected from the rest of the skew diagram, and similarly for ${\nu^+}/{\mu^+}$. This gives us a natural bijection between $T_{\lambda, \mu}^\nu$ and $T_{\lambda, \mu^+}^{\nu^+}$. Hence 
$|T_{\lambda, \mu}^\nu|=|T_{\lambda, \mu^+}^{\nu^+}|$, and (1) is proved.

The proof of (2) is the same, as $\mu_1-\mu_2 \geq |\lambda|$ also implies that the first row of $\nu/\mu$ is disconnected from the rest of it.
\end{proof}

\begin{rem}
	\label{LMonoton}
When $\lambda$, $\mu$, and $\nu$ do not satisfy the conditions in Lemma \ref{Lstability}, the one unit shift of the first row may fail to be a bijection between $T_{\lambda, \mu}^\nu$ and $T_{\lambda, \mu^+}^{\nu^+}$. However, it is still a well-defined injection from $T_{\lambda, \mu}^\nu$ to $T_{\lambda, \mu^+}^{\nu^+}$, which means $c_{\lambda, \mu}^\nu \leq c_{\lambda, \mu^+}^{\nu^+}$. In other words, the sequence $\left\{c_{\lambda, \mu+(n)}^{\nu+(n)}\right\}$ is weakly increasing and is constant when $n$ is large.
\end{rem}

Theorem \ref{mainresult} states that $\text{FStab}(\left\{(s_{\lambda[n]}\# s_{\mu[n-r]})_{n+t}\right\}_n)=|\lambda| +|\mu|+\lambda_1+\mu_1+ 3t+ 2r$. We first show that $\text{FStab}(s_{\lambda[n]}\# s_{\mu[n-r]})_{n+t})\leq |\lambda| +|\mu|+\lambda_1+\mu_1+ 3t+ 2r$, i.e.
\begin{equation}
\label{Aguiar}
h_{\lambda[n], \mu[n-r]}^{\nu^-}= h_{\lambda[n+1], \mu[n-r+1]}^{\nu}
\end{equation}
for all $\nu\vdash n+t+1$ when $n\geq |\lambda| +|\mu|+\lambda_1+\mu_1+ 3t+ 2r$.

To prove \eqref{Aguiar}, we express the Heisenberg coefficient in terms of the Littlewood-Richardson coefficients and the Kronecker coefficients.

\begin{lem}
	\label{formulaheisenberg}
	For each $\nu\vdash l$,
	\begin{equation}
	\label{h=cg}
	h_{\lambda, \mu}^\nu=\sum\limits_{\shortstack{\scriptsize {$\alpha \vdash a, \rho \vdash c$, \scriptsize $\tau \vdash n$}\\ \scriptsize {$\beta, \eta, \delta \vdash b$}}} c_{\alpha, \beta}^\lambda\,\, c_{\eta, \rho}^\mu\,\, g_{\beta, \eta}^\delta\,\, c_{\alpha, \delta}^\tau\,\, c_{\tau, \rho}^\nu
	\end{equation}
	where $\max(n,m)\leq l\leq n+m$, $a=l-m$, $b=m+n-l$, and $c=l-n$.
\end{lem}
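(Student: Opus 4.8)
The plan is to unwind the definition of the Aguiar coefficient $a_{\lambda, \mu}^\nu$ directly, using Frobenius reciprocity together with the two branching rules implicit in the inclusion diagram \eqref{IncluDiag}. Write $n=a+b$, $m=b+c$ and $l=a+b+c$, let $G:=S_a\times S_b\times S_c$, and let $\langle\,,\,\rangle$ denote the multiplicity pairing on representations of a fixed finite group. By definition $a_{\lambda, \mu}^\nu=\langle(V_\lambda\sharp V_\mu)_l,V_\nu\rangle$, and since $(V_\lambda\sharp V_\mu)_l$ is an induction from $G$ to $S_l$, Frobenius reciprocity rewrites this as a pairing of two restrictions to $G$:
\begin{equation*}
a_{\lambda, \mu}^\nu=\big\langle \text{Res}^{S_n\times S_m}_{G}(V_\lambda\otimes V_\mu),\ \text{Res}^{S_l}_{G}V_\nu\big\rangle.
\end{equation*}
Because the irreducible representations of $G$ are exactly the external tensor products $V_\alpha\otimes V_\delta\otimes V_\rho$ with $\alpha\vdash a$, $\delta\vdash b$, $\rho\vdash c$, this pairing is computed by expanding each restriction into irreducibles of $G$ and summing the products of the two multiplicities over all such $(\alpha,\delta,\rho)$.

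Next I would compute the two restrictions separately. For the left-hand factor, the inclusion $G\hookrightarrow S_n\times S_m$ factors as $id_{S_a}\times\Delta_{S_b}\times id_{S_c}$ followed by $S_a\times S_b\times S_b\times S_c\hookrightarrow S_{a+b}\times S_{b+c}$, exactly as in \eqref{IncluDiag}. Restricting $V_\lambda\otimes V_\mu$ along the second map is two independent applications of the ordinary branching rule, yielding $\sum c_{\alpha, \beta}^\lambda\,c_{\eta, \rho}^\mu\,V_\alpha\otimes V_\beta\otimes V_\eta\otimes V_\rho$ with $\beta,\eta\vdash b$; restricting the middle factor $V_\beta\otimes V_\eta$ along the diagonal $\Delta_{S_b}$ is, by the very definition of the Kronecker coefficients in Section~\ref{Preliminaries}, $\sum_{\delta\vdash b}g_{\beta, \eta}^\delta\,V_\delta$. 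Hence the multiplicity of $V_\alpha\otimes V_\delta\otimes V_\rho$ in the left-hand restriction is $\sum_{\beta,\eta\vdash b}c_{\alpha, \beta}^\lambda\,c_{\eta, \rho}^\mu\,g_{\beta, \eta}^\delta$.

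For the right-hand factor I would restrict $V_\nu$ from $S_l=S_{(a+b)+c}$ down to $G$ in two steps, first to $S_{a+b}\times S_c$ and then, on the first tensor factor, to $S_a\times S_b$. The first step gives $\sum_{\tau\vdash a+b}c_{\tau, \rho}^\nu\,V_\tau\otimes V_\rho$ and the second gives $\sum_{\alpha,\delta}c_{\alpha, \delta}^\tau\,V_\alpha\otimes V_\delta$, so the multiplicity of $V_\alpha\otimes V_\delta\otimes V_\rho$ in $\text{Res}^{S_l}_{G}V_\nu$ is $\sum_{\tau\vdash n}c_{\alpha, \delta}^\tau\,c_{\tau, \rho}^\nu$ (recall $a+b=n$). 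Multiplying the two multiplicities and summing over $\alpha\vdash a$, $\delta\vdash b$, $\rho\vdash c$ then reproduces the claimed formula, with the internal indices $\beta,\eta,\delta\vdash b$ and $\tau\vdash n$ running over the inner sums.

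Conceptually there is no serious obstacle here: once Frobenius reciprocity reduces the problem to a pairing of restrictions, the whole argument is bookkeeping. The only genuinely non-formal input is recognizing the restriction along the diagonal embedding $\Delta_{S_b}$ as a Kronecker coefficient, which is precisely how $g_{\beta, \eta}^\delta$ was defined; everything else is the Littlewood-Richardson branching rule. The main place to be careful is the order of the iterated branchings — one must restrict $V_\nu$ through the intermediate group $S_{a+b}\times S_c$, so that the summation variable $\tau$ is a partition of $n$ as in the statement, rather than through $S_a\times S_{b+c}$ — and one must keep the two copies of $S_b$ straight so that the Kronecker coefficient correctly pairs the part $\beta$ coming from $V_\lambda$ with the part $\eta$ coming from $V_\mu$.
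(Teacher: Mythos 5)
Your proposal is correct and follows essentially the same route as the paper: the same three-stage decomposition along the inclusion diagram, with the Littlewood--Richardson branching for the outer factors, the Kronecker coefficient for the diagonal restriction of the two $S_b$-factors, and the same intermediate subgroup $S_{a+b}\times S_c$ producing $\tau\vdash n$. The only (cosmetic) difference is that you handle the final step by applying Frobenius reciprocity and restricting $V_\nu$, whereas the paper computes the induction $\text{Ind}_{S_a\times S_b\times S_c}^{S_l}$ directly in two substeps; these are dual descriptions of the same branching coefficients.
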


\begin{proof}
Consider the diagram \eqref{IncluDiag} we used to define the Heisenberg product. Given partitions $\lambda\vdash n$ and $\mu\vdash m$, $V_\lambda \otimes V_\mu$ is a representation of $S_n\times S_m (=S_{a+b}\times S_{b+c})$. We compute the Heisenberg product of $V_\lambda$ and $V_\mu$ in three steps.
\begin{equation}
\label{Step}
\raisebox{40pt}{
\xymatrix{
	{S_a\! \times\! S_b\! \times\! S_b\! \times\! S_c} \ar@/_1pc/[d]_{(2)} &&{S_{a+b}\! \times\! S_{b+c} =S_n\!\times\! S_m} \ar@/_1.5pc/[ll]_{(1)} \ar@{^{(}->}[ll];[]\\
	{S_a \times S_b \times S_c} \ar@{^{(}->}[u]
	\ar@{^{(}->}[urr] \ar@{^{(}->}[rr]_{(3)} \ar@{^{(}->}[dr]^{(3.1)}&&{S_{a+b+c} =S_l}\\
	&{S_{a+b}\times S_c}  \ar@{^{(}->}[ur]^{(3.2)}& \hskip -40mm =S_n\times S_c
	}}
\end{equation}

First, we restrict the representation from $S_n\times S_m$ to $S_a\times S_b\times S_b\times S_c$,

\begin{equation}
\tag*{(1)}
\text{Res}_{S_a\times S_b\times S_b\times S_c}^{S_n\times S_m} (V_\lambda \otimes V_\mu)= \bigoplus\limits_{\shortstack{\scriptsize {$\alpha \vdash a$}\\ \scriptsize {$\beta \vdash b$}}} \bigoplus\limits_{\shortstack{\scriptsize {$\eta \vdash b$}\\\scriptsize {$\rho \vdash c$}}} c_{\alpha, \beta}^\lambda\,\, c_{\eta, \rho}^\mu\, V_\alpha\otimes V_\beta \otimes V_\eta \otimes V_\rho.
\end{equation}

Second, pull back to $S_a\times S_b\times S_c$ along the diagonal map of $S_b$. For $\alpha \vdash a$, $\rho \vdash c$, and $\beta, \eta\vdash b$ we have,

\begin{align}
\tag*{(2)}
\text{Res}_{S_a\times S_b\times S_c}^{S_a\times S_b\times S_b\times S_c} (V_\alpha\otimes V_\beta \otimes V_\eta \otimes V_\rho)= \bigoplus\limits_{\delta\vdash b} g_{\beta, \eta}^\delta\, V_\alpha\otimes V_\delta\otimes V_\rho.
\end{align}

The final step is the induction from $S_a\times S_b\times S_c$ to $S_{a+b+c} (=S_l)$. Break this step into two substeps as in \eqref{Step}. Given $\alpha \vdash a$, $\delta\vdash b$, and $\rho\vdash c$, we have:
\begin{align*}
\tag*{(3)}
	\text{Ind}_{S_a\times S_b\times S_c}^{S_l} (V_\alpha\otimes V_\delta\otimes V_\rho)&= \text{Ind}_{S_n\times S_c}^{S_l}\text{Ind}_{S_a\times S_b\times S_c}^{S_n\times S_{c}} (V_\alpha\otimes V_\delta\otimes V_\rho)\\
	&=\bigoplus\limits_{\shortstack{\scriptsize {$\tau \vdash n$}\\ \scriptsize {$\nu \vdash l$}}} c_{\alpha, \delta}^\tau\,\, c_{\tau, \rho}^\nu\,\, V_\nu.
\end{align*}
Combining $(1)$, $(2)$, and $(3)$ together, gives

\begin{align*}
	(V_\lambda\otimes V_\mu)_l&=\text{Ind}_{S_a\times S_b\times S_c}^{S_{l}} \text{Res}_{S_a\times S_b\times S_c}^{S_a\times S_b\times S_b\times S_c} \text{Res}_{S_a\times S_b\times S_b\times S_c}^{S_{n}\times S_{m}} (V_\lambda\otimes V_\mu)\\
	&=\bigoplus\limits_{\shortstack{\scriptsize {$\alpha \vdash a, \rho \vdash c, \tau \vdash n$}\\ \scriptsize ${\beta, \eta, \delta \vdash b, \nu\vdash l}$}} c_{\alpha, \beta}^\lambda\,\, c_{\eta, \rho}^\mu\,\, g_{\beta, \eta}^\delta\,\, c_{\alpha, \delta}^\tau\,\, c_{\tau, \rho}^\nu\,\, V_\nu 
\end{align*}
So for $\nu\vdash l$,
\begin{equation*}
h_{\lambda, \mu}^\nu=\sum\limits_{\shortstack{\scriptsize {$\alpha \vdash a, \rho \vdash c$, \scriptsize $\tau \vdash n$}\\ \scriptsize {$\beta, \eta, \delta \vdash b$}}} c_{\alpha, \beta}^\lambda\,\, c_{\eta, \rho}^\mu\,\, g_{\beta, \eta}^\delta\,\, c_{\alpha, \delta}^\tau\,\, c_{\tau, \rho}^\nu,
\end{equation*}
as claimed.
\end{proof}

We set $c_{\lambda, \mu}^\nu=0$ when $\lambda$, $\mu$, or $\nu$ is not a partition. Then (\ref{h=cg}) holds for all sequences $\nu$ with sum $l$. Applying (\ref{h=cg}), to prove (\ref{Aguiar}), it is enough to show that, when  $n\geq |\lambda| +|\mu|+\lambda_1+\mu_1+ 3t+ 2r$,
\begin{equation}
\label{equationforstability1}
\begin{split}
\sum_{(\alpha, \beta, \eta, \rho, \delta, \tau) \in T} &c_{\alpha, \beta}^{\lambda [n]}\,\, c_{\eta, \rho}^{\mu[n-r]}\,\, g_{\beta, \eta}^\delta\,\, c_{\alpha, \delta}^\tau\,\, c_{\tau, \rho}^{\nu^-}= 
\\
&\sum_{(\alpha^*, \beta^*, \eta^*, \rho^*, \delta^*, \tau^*)\in T^*} c_{\alpha^*, \beta^*}^{\lambda [n+1]}\,\, c_{\eta^*, \rho^*}^{\mu[n+1-r]}\,\, g_{\beta^*, \eta^*}^{\delta^*}\,\, c_{\alpha^*, \delta^*}^{\tau^*}\,\, c_{\tau^*, \rho^*}^{\nu}
\end{split}
\end{equation}

\noindent for all $\nu \vdash n+t+1$, where 
\begin{eqnarray*}
T&=&\{(\alpha, \beta, \eta, \rho, \delta, \tau)\mid\alpha \vdash r+t, \rho\vdash t, \tau\vdash n, \beta, \eta, \delta\vdash n-r-t \};\\
T^*&=&\{(\alpha^*, \beta^*, \eta^*, \rho^*, \delta^*, \tau^*)\mid
\alpha^* \vdash r+t, \rho^*\vdash t,\tau^*\vdash n+1,\\ &&\hskip 70mm\beta^*,\eta^*, \delta^* \vdash n-r-t+1\}.
\end{eqnarray*}

Define $f: T\longmapsto \mathbb{Z}_{\geq 0}$ and $f^*: T^*\longmapsto \mathbb{Z}_{\geq 0}$ as follows:
$$f(\alpha, \beta, \eta, \rho, \delta, \tau)=c_{\alpha, \beta}^{\lambda [n]}\,\, c_{\eta, \rho}^{\mu[n-r]}\,\, g_{\beta, \eta}^\delta\,\, c_{\alpha, \delta}^\tau\,\, c_{\tau, \rho}^{\nu^-},$$
$$f^*(\alpha^*, \beta^*, \eta^*, \rho^*, \delta^*, \tau^*)=c_{\alpha^*, \beta^*}^{\lambda [n+1]}\,\, c_{\eta^*, \rho^*}^{\mu[n+1-r]}\,\, g_{\beta^*, \eta^*}^{\delta^*}\,\, c_{\alpha^*, \delta^*}^{\tau^*}\,\, c_{\tau^*, \rho^*}^{\nu}.$$
Then Equation (\ref{equationforstability1}) becomes:
\begin{equation}
\label{sumf=f*}
\sum_{u \in T} f(u) =\sum_{u^*\in T^*} f^*(u^*).
\end{equation}
Some terms in the sums of (\ref{sumf=f*}) vanish. Let us consider only the nonvanishing terms. 

\noindent Let $T_0 =T \smallsetminus f^{-1}(0)$ and $T^*_0= T^*\smallsetminus {f^*}^{-1}(0)$, then (\ref{sumf=f*}) is equivalent to
\begin{equation}
\label{f=f*sum}
\sum_{u \in T_0} f(u) =\sum_{u*\in T^*_0} f^*(u^*).
\end{equation}

\begin{lem}
	\label{proofmap}
	When $n\geq |\lambda| +|\mu|+\lambda_1+\mu_1+ 3t+ 2r$, the embedding $\varphi$ from $T$ to $T^*$:
	$$\varphi (\alpha, \beta, \eta, \rho, \delta, \tau)= (\alpha, \beta^+, \eta^+, \rho, \delta^+, \tau^+)$$ induces a map $\varphi|_{T_0}$ from $T_0$ to $T^*_0$. Moreover, $f|_{T_0}=f^*\circ \varphi|_{T_0}$.
\end{lem}

\begin{proof}
For all $u=(\alpha, \beta, \eta, \rho, \delta, \tau)\in T_0$, we show that $\beta$, $\eta$, $\delta$, and $\tau$ have large enough first parts so that we can apply Proposition \ref{Kstability} and Lemma \ref{Lstability} to the Kronecker coefficients and the Littlewood-Richardson coefficients appearing in the definition of $f$.

Since $n \geq |\lambda| +|\mu|+\lambda_1+\mu_1+ 3t+ 2r$, we have
$$\lambda[n]_1- \lambda[n]_2 = n-|\lambda|-\lambda_1 \geq |\mu|+\mu_1+ 3t+ 2r \geq r+t \hskip 2mm (=|\alpha|)$$
and
$$\mu[n-r]_1- \mu[n-r]_2 = n-r- |\mu|-\mu_1 \geq |\lambda|+\lambda_1 + 3t+ r \geq t \hskip 2mm (=|\rho|).$$
Using Lemma \ref{Lstability} (1), we get
$$c_{\alpha, \beta}^{\lambda[n]}= c_{\alpha, \beta^+}^{\lambda[n+1]} \quad \text{and} \hskip 5mm c_{\eta, \rho}^{\mu[n-r]}=c_{\eta^+, \rho}^{\mu[n+1-r]}.$$
As $\beta \subset \lambda[n]$,  $|\overline{\beta}|\leq |\lambda| < n-r-t$ and $(\overline{\beta})_1\leq \lambda_1$. Similarly, we have $|\overline{\eta}|\leq |\mu| < n-r-t$ and $(\overline{\eta})_1\leq \mu_1$. Since $\beta$ and $\eta$ are both partitions of $n-r-t$, they can be written as $\beta=\overline{\beta}[n-r-t]$ and $\eta=\overline{\eta}[n-r-t]$ respectively. They both have large first parts. More specifically, we have
$$n-r-t\geq |\lambda|+|\mu|+\lambda_1+\mu_1+2t+r \geq |\overline{\beta}|+|\overline{\eta}|+(\overline{\beta})_1+(\overline{\eta})_1.$$
By Proposition \ref{Kstability}, we have
$$g_{\beta, \eta}^\delta = g_{\beta^+, \eta^+}^{\delta^+}=\overline{g}_{\overline{\beta}, \overline{\eta}}^{\overline{\delta}}.$$

Followed from Proposition \ref{Knonzero}, $$|\overline{\delta}|+(\overline{\delta})_1\leq |\overline{\beta}|+|\overline{\eta}|+(\overline{\beta})_1+(\overline{\eta})_1$$
for otherwise $g_{\beta, \eta}^\delta=0$, which implies that $f(u)=0$, contradiction! Hence, 
$$|\delta|-\delta_1+\delta_2\leq |\lambda|+|\mu|+\lambda_1+\mu_1$$
which gives us
\begin{align*}
\delta_1-\delta_2 \geq n-r-t-|\lambda|-|\mu|-\lambda_1-\mu_1\geq 2t+r\geq |\alpha|
\end{align*}
Applying Lemma \ref{Lstability} (2), we get
$$c_{\alpha, \delta}^\tau= c_{\alpha, \delta^+}^{\tau^+}.$$
Since $c_{\alpha, \delta}^\tau \neq 0$, after Proposition \ref{LR}, we have
$$\tau_2\leq \delta_2+|\alpha| \quad \text{and} \quad \tau_1\geq \delta_1.$$
So
\begin{align*}
\tau_1-\tau_2 \geq \delta_1-(\delta_2+|\alpha|) \geq 2t+r-(r+t)= t = |\rho|.
\end{align*}
Hence, by Lemma \ref{Lstability} (2), we get
$$c_{\tau, \rho}^{\nu^-}= c_{\tau^+, \rho}^{\nu}.$$
So
\begin{equation}
\label{f=f*foreach}
f(\alpha, \beta, \eta, \rho, \delta, \tau)=f^*(\varphi (\alpha, \beta, \eta, \rho, \delta, \tau)) (\neq 0),
\end{equation}
\noindent which means $\varphi (T_0) \subset T^*_0$ and $f|_{T_0}=f^*\circ \varphi|_{T_0}$.
\end{proof}

To show that $\varphi$ is a bijection between $T_0$ and $T_0^*$, we need construct a reverse map.

\begin{lem}
	\label{ReverseMap}
		When $n\geq |\lambda| +|\mu|+\lambda_1+\mu_1+ 3t+ 2r$, the map $\phi: (\alpha, \beta, \eta, \rho, \delta, \tau)\longrightarrow (\alpha, \beta^-,\eta^-, \rho, \delta^-, \tau^-)$ is well-defined from $T_0^*$ to $T_0$. Moreover, $f^*|_{T_0^*}=f\circ \phi.$
\end{lem}
\begin{proof}
Take $u=(\alpha, \beta, \eta, \rho, \delta, \tau)\in T_0^*$, we first show that $\beta^-, \eta^-, \delta^-,$ and $\tau^-$ are partitions.
Since $f^*(u)\neq 0$, we get $c_{\alpha, \beta}^{\lambda[n+1]}\neq 0$. After Proposition \ref{LR}, we must have $\overline{\beta}\subset \lambda$ and $\lambda[n+1]-\beta_1\leq |\alpha|$. Hence,
$$\beta_1-\beta_2\geq (n+1-|\lambda|-|\alpha|)-\lambda_1\geq|\mu|+\mu_1+2t+r+1\geq 1.$$
So $\beta^-$ is a partition. Similarly, we can show $\eta^-$ is a partition. Using Proposition \ref{Kstability} and Proposition \ref{Knonzero} as we did in the proof of Lemma \ref{proofmap}, we see that $\delta^-$ is a partition for
$$\delta_1-\delta_2\geq 2t+r+1\geq 1.$$
As $c_{\alpha, \delta}^\tau\neq 0$, we have $\tau_1\geq \delta_1$ and $\tau_2\leq \delta_2+|\alpha|$. This shows that $\tau^-$ is a partition because
$$\tau_1-\tau_2\geq \delta_1-(\delta_2+|\alpha|)\geq t+1\geq 1.$$
Then by the same argument as in the proof of Lemma \ref{proofmap}, we can show that $f^*|_{T_0^*}=f\circ \phi$, which implies that $\phi(T_0^*)\subset T_0.$
\end{proof}

\begin{proof}[Proof of Theorem \ref{mainresult}]
Combining Lemma \ref{proofmap} and Lemma \ref{ReverseMap}, we know $\varphi$ is a bijection between $T_0$ and $T_0^*$. With this and (\ref{f=f*foreach}), we prove (\ref{f=f*sum}), and hence
$$\text{FStab}(s_{\lambda[n]}\# s_{\mu[n-r]})_{n+t})\leq |\lambda| +|\mu|+\lambda_1+\mu_1+ 3t+ 2r.$$

To prove that the stabilization begins at $|\lambda| +|\mu|+\lambda_1+\mu_1+ 3t+ 2r$, it is enough to show that there is $\nu\vdash n+t$ with $\nu_1=\nu_2$ (then $\nu^-$ is not a partition) such that $h_{\lambda[n], \mu[n-r]}^\nu\neq 0$ when $n = |\lambda| +|\mu|+\lambda_1+\mu_1+ 3t+ 2r$. We use the Formula (\ref{h=cg}) for $h_{\lambda[n], \mu[n-r]}^\nu\neq 0$ (replace $\lambda$ and $\nu$ by $\lambda[n]$ and $\mu[n-r]$ respectively, and set $l=n+t$), and take
\vskip -2mm
$$\alpha=(a)=(r+t), \rho=(c)=(t),$$ 
$$\beta=\lambda[n]-\alpha=(n-|\lambda|-r-t, \lambda_1, \lambda_2, \dots)=(|\mu|+\lambda_1+\mu_1+2t+r, \lambda_1, \dots),$$
$$\eta=\mu[n-r]-\rho=(n-r-|\mu|-t, \mu_1, \mu_2, \dots)=(|\lambda|+\lambda_1+\mu_1+2t+r, \mu_1, \dots),$$
$$\delta=(\overline{\beta}+\overline{\eta})[n-r-t]=(n-r-t-|\overline{\beta}|-|\overline{\eta}|,\beta_2+\eta_2, \beta_3+\eta_3, \dots)=(\lambda_1+\mu_1+2t+r,\lambda_1+\mu_1, \dots),$$
$$\tau=(\delta_1, \delta_2+|\alpha|, \delta_3, \dots)=(\lambda_1+\mu_1+2t+r, \lambda_1+\mu_1+r+t, \lambda_2+\mu_2, \dots),$$
$$\nu=(\tau_1, \tau_2+|\rho|, \dots)=(\lambda_1+\mu_1+2t+r, \lambda_1+\mu_1+2t+r, \lambda_2+\mu_2, \dots).$$\\
\noindent By the Pieri Rule, $1=c_{\alpha, \beta}^{\lambda[n]}=c_{\eta, \rho}^{\mu[n-d]}=c_{\alpha, \delta}^\tau=c_{\tau, \rho}^\nu$, as $\alpha$ and $\rho$ have only one part each. Since $|\overline{\delta}|=|\overline{\beta}|+|\overline{\eta}|$, we have $g_{\beta, \eta}^\delta=\overline{g}_{\overline{\beta}, \overline{\eta}}^{\overline{\delta}}=c_{\overline{\beta}, \overline{\eta}}^{\overline{\delta}}$ (note that $\overline{\delta}=\overline{\beta}+\overline{\eta}$) which is also nonzero according to Proposition \ref{LR}.

So $h_{\lambda[n],\mu[n-r]}^{\nu}\neq 0$ and $\nu_1=\nu_2=\lambda_1+\lambda_2+2t+r$, this proves that $n = |\lambda| +|\mu|+\lambda_1+\mu_1+ 3t+ 2r$ is where the stabilization begins.
\end{proof}

When $n<|\lambda| +|\mu|+\lambda_1+\mu_1+ 3t+ 2r$, following the same arguments as in the proof of Lemma \ref{proofmap} (except for using Proposition \ref{KWeaklyIncrease}  and Remark \ref{LMonoton} instead of Proposition \ref{Kstability} and Lemma \ref{Lstability}), we can show that the map $\varphi$ in Lemma \ref{proofmap} induces an injection from $T_0$ to $T^*_0$ with $f|_{T_0}\leq f^*\circ \varphi|_{T_0}$. This gives us the following corollary:

\begin{cor}
	Given three partitions $\lambda$, $\mu$, and $\nu$ and two nonnegative integers $r$ and $t$, the sequence $h_{\lambda[n],\mu[n-r]}^{\nu[n+t]}$ is weakly increasing.
\end{cor}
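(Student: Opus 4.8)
The plan is to reuse the machinery already built for Theorem 2.3, but replace every exact equality by the corresponding inequality. Recall that the corollary asserts the sequence $a_{\lambda[n],\mu[n-d]}^{\nu[n+h]}$ is weakly increasing in $n$; by Lemma 3.3 this coefficient equals the sum $\sum_{(\alpha,\beta,\eta,\rho,\delta,\tau)\in T}f(\alpha,\beta,\eta,\rho,\delta,\tau)$ (with $\nu^-$ replaced by $\nu[n+h]$ in the last factor), and the $n+1$ coefficient is the analogous sum over $T^*$. So it suffices to exhibit an injection $T_0\hookrightarrow T_0^*$ along which $f\leq f^*$, exactly as in the proof of Theorem 2.3, where now I no longer assume $n$ is past the stability threshold.

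First I would take the same combinatorial map $\varphi(\alpha,\beta,\eta,\rho,\delta,\tau)=(\alpha,\beta^+,\eta^+,\rho,\delta^+,\tau^+)$ used in Lemma 3.4. The point is that each of the five structure constants in $f$ now obeys a \emph{monotonicity} statement rather than an equality. For the two Littlewood--Richardson factors $c^{\lambda[n]}_{\alpha,\beta}$ and $c^{\mu[n-d]}_{\eta,\rho}$, and for the factors $c^{\tau}_{\alpha,\delta}$ and $c^{\nu^-}_{\tau,\rho}$, Remark 3.1 supplies the inequality $c^{\nu}_{\lambda,\mu}\leq c^{\nu^+}_{\lambda,\mu^+}$ obtained from the shift-of-first-row injection on lattice tableaux, valid for \emph{all} partitions regardless of the gap condition. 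For the Kronecker factor $g^{\delta}_{\beta,\eta}$, Proposition 3.2 gives $g^{\delta}_{\beta,\eta}\leq g^{\delta^+}_{\beta^+,\eta^+}$, i.e. weak monotonicity of $g_{\beta[m],\eta[m]}^{\delta[m]}$. Multiplying these five inequalities yields $f(x)\leq f^*(\varphi(x))$ for every $x\in T$, which in particular shows $\varphi$ carries $T_0$ into $T_0^*$.

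It then remains to check that $\varphi|_{T_0}\colon T_0\to T_0^*$ is injective; this is immediate since $\varphi$ itself is injective on all of $T$ (the operations $(-)^+$ on $\beta,\eta,\delta,\tau$ are reversible by $(-)^-$ once we know the first parts came from partitions of the right sizes, so $\varphi$ has a left inverse). Combining injectivity with the termwise bound gives
\begin{equation*}
a_{\lambda[n],\mu[n-d]}^{\nu[n+h]}=\sum_{x\in T_0}f(x)\leq\sum_{x\in T_0}f^*(\varphi(x))\leq\sum_{y\in T_0^*}f^*(y)=a_{\lambda[n+1],\mu[n+1-d]}^{\nu[n+h+1]},
\end{equation*}
which is precisely the weakly increasing claim. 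Note that unlike in Theorem 2.3 I do not need $\varphi|_{T_0}$ to be surjective, and the second inequality above accounts exactly for the terms of $T_0^*$ not in the image of $\varphi$ together with the strict gains in the individual structure constants.

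The main obstacle is purely bookkeeping rather than conceptual: I must verify that the five monotonicity inequalities all point the same direction under the single common map $\varphi$, so that they multiply cleanly into $f\leq f^*\circ\varphi$. The delicate factor is the Kronecker coefficient, since Proposition 3.2 is stated for the symmetric shift $g_{\beta[m],\eta[m]}^{\delta[m]}$ and I must confirm that incrementing $\beta,\eta,\delta$ simultaneously by $\varphi$ matches that hypothesis (which it does, because $\beta,\eta,\delta\vdash n-d-h$ all gain one box in their first parts at once). Once this alignment is checked, the proof reduces to the observation already recorded in the paragraph preceding the corollary, namely that Remark 3.1 and Proposition 3.2 together degrade Lemma 3.4's equality $f=f^*\circ\varphi|_{T_0}$ to the inequality $f\leq f^*\circ\varphi|_{T_0}$ while preserving the injection, and I would simply spell out that summation argument.
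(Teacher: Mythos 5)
Your proof is correct and follows essentially the same route as the paper: the paper's justification of this corollary is precisely that Remark 3.1 and Proposition 3.2 degrade the equality $f=f^*\circ\varphi|_{T_0}$ of Lemma 3.4 to the inequality $f\leq f^*\circ\varphi|_{T_0}$ while $\varphi|_{T_0}$ remains an injection into $T_0^*$, and summing gives monotonicity. Your write-up merely makes explicit the alignment of the five termwise inequalities (using the symmetry of the Littlewood--Richardson coefficients where the increment falls on the first lower index) and the final summation, which the paper leaves implicit.
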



\section {Stable Heisenberg Coefficients}
\label{StableAC}

Given partitions $\lambda$, $\mu$, and $\nu$, Theorem \ref{mainresult} tells us that the sequence $\{h_{\lambda+(n), \mu+(n)}^{\nu+(n)}\}_{n=0}^\infty$ is eventually constant. We write $\overline{h}_{\lambda,\mu}^\nu$ for that constant value, and call it a \textit{stable Heisenberg coefficient}. The stable Heisenberg coefficient generalizes the reduced Kronecker coefficient. By the way we define the stable Heisenberg coefficient, we have 
$$\overline{h}_{\lambda,\mu}^\nu=\overline{h}_{\lambda+(n), \mu+(n)}^{\nu+(n)}, \hskip 3mm \text{for all nonnegative integers} \hskip 2mm n.$$
The reason we restrict $n$ to nonnegative integers is that $\lambda+(n)$, $\mu+(n)$, and $\nu+(n)$ need to be partitions. But we can drop this restriction and extend the definition by setting
$$\overline{h}_{\lambda-(n), \mu-(n)}^{\nu-(n)}=\overline{h}_{\lambda,\mu}^\nu, \hskip 3mm \text{for all nonnegative integers} \hskip 2mm n.$$
We call a finite integer sequence $\alpha=(\alpha_1, \alpha_2, \ldots, \alpha_k)$ an h-partition if $\alpha_2\geq \alpha_3\geq \cdots\geq \alpha_k>0$, then a stable Heisenberg coefficient, in the new definition, is indexed by three h-partitions. We have
$$\overline{h}_{\lambda,\mu}^\nu=\overline{h}_{\lambda+(n), \mu+(n)}^{\nu+(n)}, \hskip 3mm \text{for all integers} \hskip 2mm n.$$
where $\lambda$, $\mu$, and $\nu$ are h-partitions.

Murnaghan \cite{F-1938} pointed out that the reduced Kronecker coefficients determine the Kronecker product. Briand et al.~\cite[Theorem 1.1]{E-2010} gave an exact formula to recover the Kronecker coefficients from reduced ones, and Bowman et al.~\cite{PartitionAlgebra} interpreted this formula in terms of the representation theory of the partition algebra. Analogously, the stable Heisenberg coefficients also determine the Heisenberg product, even for small values of $n$. This can be proved using vertex operators on symmetric functions, and the idea of the proof is the same as the proof of the stability of the Kronecker product in \cite{JT}. 

Consider the lowest degree component of $s_{2,1,1}\# s_{2,1}$ as an example. Let $n=4$, then \eqref{lowest} gives us
\begin{equation}
\label{examplerecover}
\begin{split}
(s_{2,1,1}\# s_{2,1})_4&=s_{4}+3s_{3,1}+4s_{2,2}+4s_{2,1,1}+2s_{1,3}+5s_{1,2,1}\\&+3s_{1,1,1,1}+s_{0,3,1}+s_{0,2,2}+2s_{0,2,1,1}+s_{0,1,1,1,1}.
\end{split}
\end{equation}
By the Jacobi-Trudi determinant formula,
$$s_\lambda= \text{det} (h_{\lambda_j+i-j})_{i,j}$$
where $h_k$ is the complete homogeneous symmetric function, and we set $h_k=0$ when $k$ is negative and $h_0=1$. We no longer require $\lambda$ to be a partition, $\lambda$ can be any finite integer sequence. Then the Jacobi-Trudi determinant will give us $0$ or $\pm 1$ times some Schur function.
Applying Jacobi-Trudi determinant to the right hand side of \eqref{examplerecover}, we have
$$s_{1,3}=-s_{2,2}, \hskip 3mm s_{0,3,1}=-s_{2,1,1}, \hskip 3mm s_{0,2,1,1}=-s_{1,1,1,1}, \hskip 3mm \text{and}$$
$$s_{1,2,1}=s_{0,2,2}=s_{0,1,1,1,1}=0.$$
So \eqref{examplerecover} gives us
$$(s_{2,1,1}\# s_{2,1})_4=s_4+3s_{3,1}+2s_{2,2}+3s_{2,1,1}+s_{1,1,1,1},$$
which coincides with the result we had in Section \ref{example}. This example shows the process to recover the Heisenberg coefficients from the stable ones. The following theorem generalizes the formula in \cite[Theorem 1.1]{E-2010}, and recovers the Kronecker coefficient as a special case. 

\begin{thm}
	\label{RecoverTheorem}
	Let $\lambda, \mu$, and $\nu$ be partitions with $|\nu|\geq |\lambda|\geq |\mu|$, then
\begin{equation}
\label{RecoverFormula}
h_{\lambda,\mu}^\nu=\sum\limits_{i=1}^{4|\nu|-|\lambda|-|\mu|} (-1)^{i-1}\overline{h}_{\lambda,\mu}^{\nu^{\dag i}},
\end{equation}
where $\nu^{\dag i}= (\nu_i-i+1, \nu_1+1, \nu_2+1, \dotsc, \nu_{i-1}+1, \nu_{i+1}, \nu_{i+2}, \dotsc)$.
\end{thm}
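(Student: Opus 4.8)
The plan is to reduce the statement to a single \emph{master identity} of symmetric functions followed by Jacobi--Trudi straightening, making precise the procedure illustrated by the example of Section~\ref{example}. For an extended partition $\gamma$ and $m\in\mathbb{Z}$ write $\gamma^{(m)}=(\gamma_1+m,\gamma_2,\gamma_3,\dots)=\overline\gamma[m+|\gamma|]$ for the sequence obtained by raising the first part by $m$, and read every $s_\sigma$ indexed by a non-partition integer sequence through the Jacobi--Trudi determinant, so that $s_\sigma$ is $0$ or $\pm s_\tau$ for a genuine partition $\tau$. After reducing to the case $|\lambda|\ge|\mu|$ by the symmetry of $\sharp$ (which makes the hypothesis $|\nu|\ge|\lambda|\ge|\mu|$ mild), the heart of the argument is the identity
\begin{equation}\label{master}
\bigl(s_{\lambda^{(m)}}\sharp s_{\mu^{(m)}}\bigr)_{m+|\nu|}=\sum_{R:\;|R|=|\nu|}\overline a_{\lambda,\mu}^{\,R}\;s_{R^{(m)}},
\end{equation}
asserted for every integer $m\ge 0$, the (finite) sum running over extended partitions $R$ of size $|\nu|$. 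For large $m$ all indices $R^{(m)}$ are genuine partitions and \eqref{master} is exactly the definition of the stable coefficients via Theorem~2.3; the content is that, read through Jacobi--Trudi, \eqref{master} persists down to $m=0$, where it reads $(s_\lambda\sharp s_\mu)_{|\nu|}=\sum_R\overline a_{\lambda,\mu}^{\,R}\,s_R$.

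Granting \eqref{master}, Theorem~5.1 is the coefficient of $s_\nu$ on the right-hand side at $m=0$. Since $\nu$ is a genuine partition while each $R$ is an extended partition (so $R_2\ge R_3\ge\cdots\ge 0$ but $R_1$ may be any integer), I identify the contributing $R$ by the straightening rule: with $\delta_k=(k-1,k-2,\dots,1,0)$ one has $s_R=\pm s_\nu$ exactly when $R+\delta_k$ is a rearrangement of the strictly decreasing word $\nu+\delta_k$, and $s_R=0$ otherwise. Because $R_2+(k-2)>R_3+(k-3)>\cdots$ is automatically strictly decreasing, such an $R$ is determined by choosing which entry $\nu_i+(k-i)$ of $\nu+\delta_k$ is moved to the front, the rest filling positions $2,3,\dots$ in decreasing order. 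Undoing $\delta_k$ shows that this $R$ is precisely $\nu^{\dag i}$, and that $R+\delta_k$ arises from $\nu+\delta_k$ by the $i$-cycle sending position $i$ to the front, of sign $(-1)^{i-1}$. Hence $s_{\nu^{\dag i}}=(-1)^{i-1}s_\nu$, and the coefficient of $s_\nu$ in $\sum_R\overline a_{\lambda,\mu}^{\,R}\,s_R$ is $\sum_i(-1)^{i-1}\overline a_{\lambda,\mu}^{\,\nu^{\dag i}}$, as claimed.

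It remains to truncate the sum at $i=4|\nu|-|\lambda|-|\mu|$. The tail of $\nu^{\dag i}$ is $\overline{\nu^{\dag i}}=(\nu_1+1,\dots,\nu_{i-1}+1,\nu_{i+1},\dots)$, of size $|\nu|-\nu_i+i-1$ and first part $\nu_1+1$, both growing with $i$. Inserting this into the stable form of Lemma~3.3 (in which the Kronecker factor becomes a reduced Kronecker coefficient, since the overlap $b\to\infty$) and invoking the vanishing bound of Proposition~2.2 together with the containment constraints of the Littlewood--Richardson coefficients, one sees that $\overline a_{\lambda,\mu}^{\,\nu^{\dag i}}=0$ once $i$ exceeds this threshold; I expect the exact constant to fall out of a routine size estimate.

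The main obstacle is \eqref{master} at small $m$. My route is to reorganize Lemma~3.3 by summing its last two Littlewood--Richardson factors into Schur functions, using $\sum_\nu c_{\tau,\rho}^\nu s_\nu=s_\tau s_\rho$ and $\sum_\tau c_{\alpha,\delta}^\tau s_\tau=s_\alpha s_\delta$, to obtain the exact identity
\[
\bigl(s_{\lambda^{(m)}}\sharp s_{\mu^{(m)}}\bigr)_{m+|\nu|}=\sum_{\alpha\vdash a,\;\rho\vdash c,\;\beta,\eta,\delta\vdash b_m} c_{\alpha,\beta}^{\lambda^{(m)}}\,c_{\eta,\rho}^{\mu^{(m)}}\,g_{\beta,\eta}^{\delta}\;s_\alpha\,s_\delta\,s_\rho,
\]
where $a=|\nu|-|\mu|$ and $c=|\nu|-|\lambda|$ are independent of $m$ while $b_m=m+|\lambda|+|\mu|-|\nu|\to\infty$. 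As $m$ grows the first parts of $\beta,\eta,\delta$ grow, so the Littlewood--Richardson factors stabilize by Lemma~3.1 and Remark~3.1 and the Kronecker factor reaches $\overline g_{\overline\beta,\overline\eta}^{\overline\delta}$ by Proposition~2.1; this both pins down the $\overline a_{\lambda,\mu}^{\,R}$ and proves \eqref{master} in the stable range. To descend to $m=0$ I would argue by downward induction on $m$: lowering $m$ by one breaks the bijection of Lemma~3.4 by exactly the terms quantified in Remark~3.1 and Corollary~3.5, and I would match this controlled defect against the Jacobi--Trudi straightening of the single index $R^{(m)}$ that ceases to be a partition at that step. Verifying that these two effects cancel term by term --- equivalently, that the shift $R\mapsto R^{(m)}$ is compatible with Jacobi--Trudi, so that equality for large $m$ forces equality for all $m\ge 0$ --- is where I expect essentially all of the difficulty to lie; the straightening and truncation above are then bookkeeping.
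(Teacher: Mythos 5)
You have correctly identified the architecture of the argument, and it is the same as the paper's: stabilize the component $(s_{\overline{\lambda}[m+|\lambda|]}\sharp s_{\overline{\mu}[m+|\mu|]})_{m+|\nu|}$ for large $m$, transport that expansion down to $m=0$ with every index reinterpreted through the Jacobi--Trudi determinant, and then extract the coefficient of $s_\nu$ by deciding which shifted indices straighten to $\pm s_\nu$. Your straightening analysis is correct and matches the paper exactly: the contributing indices are the $\nu^{\dag i}$, with sign $(-1)^{i-1}$ coming from the $i$-cycle that moves position $i$ to the front. Your truncation is handled differently --- you propose to show $\overline{a}_{\lambda,\mu}^{\nu^{\dag i}}=0$ for large $i$ via Proposition 2.2 and Littlewood--Richardson containments, whereas the paper simply takes $n=3|\nu|-|\lambda|-|\mu|$ and bounds $i$ by the length of $\tau\vdash n+|\nu|$; your route is plausible but is left as ``a routine size estimate.''

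The genuine gap is the one you name yourself: the master identity at $m=0$. Everything rests on the claim that
$$(s_\lambda\sharp s_\mu)_{|\nu|}=\sum_{\tau\vdash n+|\nu|}\overline{a}_{\lambda,\mu}^{\overline{\tau}[-n+|\tau|]}\,s_{\overline{\tau}[-n+|\tau|]},$$
that is, that the stabilized expansion, shifted down by $n$ and read through Jacobi--Trudi, still computes the unstable low-degree component. You sketch a downward induction on $m$ in which the defect of the bijection of Lemma 3.4 (controlled by Remark 3.1 and Corollary 3.5) is supposed to cancel against the straightening of the one index that ceases to be a partition at each step, but you explicitly do not carry this out and say it is where you expect essentially all the difficulty to lie. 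As written, the proposal is therefore not a complete proof. For what it is worth, the paper itself passes over this same step with a single ``So'' after displaying the stabilized expansion, so you have correctly located the load-bearing claim; but locating it is not the same as establishing it, and your proposed term-by-term cancellation would have to be verified before the theorem follows.
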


Consider an example. From Section~\ref{example}, we know that $h_{(2,1,1), (2,1)}^{(2,2)}=2$.
On the other hand, using the Formula \eqref{RecoverFormula}, we have
\begin{equation}
\label{key}
\begin{split}
h_{(2,1,1), (2,1)}^{(2,2)}&=\overline{h}_{(2,1,1), (2,1)}^{(2,2)}-\overline{h}_{(2,1,1), (2,1)}^{(1,3)}+\overline{h}_{(2,1,1), (2,1)}^{(-2,3,3)}
\\&-\overline{h}_{(2,1,1), (2,1)}^{(-3,3,3,1)}+\cdots.
\end{split}
\end{equation}

From \eqref{lowest}, we have $$\overline{h}_{(2,1,1), (2,1)}^{(2,2)}=4, \hskip 4mm \overline{h}_{(2,1,1), (2,1)}^{(1,3)}=2,$$ 
and
$$\overline{h}_{(2,1,1), (2,1)}^{(2,2)^{\dag i}}=0, \hskip 2mm \text{when} \hskip 2mm i\geq 3.$$
So \eqref{key} gives us
$$h_{(2,1,1), (2,1)}^{(2,2)}=4-2=2.$$

\begin{proof}[Proof of Theorem \ref{RecoverTheorem}]
From Theorem \ref{mainresult}, we know that when $n\geq |\overline{\lambda}|+|\overline{\mu}|+\lambda_2+\mu_2+3(|\nu|-|\lambda|)+2(|\lambda|-|\mu|)-|\lambda|$, the Heisenberg coefficients of $(s_{\lambda+(n)}\# s_{\mu+(n)})_{n+|\nu|}$ stabilize, i.e.
\begin{equation*}
\begin{split}
(s_{\lambda+(n)}\# s_{\mu+(n)})_{n+|\nu|}&=\sum\limits_{\tau\vdash n+|\nu|} h_{\lambda+(n),\mu+(n)}^\tau s_\tau
\\&=\sum\limits_{\tau\vdash n+|\nu|} \overline{h}_{\lambda+(n),\mu+(n)}^\tau s_\tau
\\&=\sum\limits_{\tau\vdash n+|\nu|} \overline{h}_{\lambda,\mu}^{\tau-(n)} s_\tau.
\end{split}
\end{equation*}
So 
\begin{equation}
\label{stableformula}
\begin{split}
(s_\lambda\# s_\mu)_{|\nu|}=\sum\limits_{\tau\vdash n+|\nu|} \overline{h}_{\lambda,\mu}^{\tau-(n)} s_{\tau-(n)}.
\end{split}
\end{equation}

To get $h_{\lambda,\mu}^\nu$ from \eqref{stableformula}, we determine which $s_{\tau-(n)}$'s would give us $\pm s_\nu$. Suppose the length of $\tau$ is $l$. From the Jacobi-Trudi formula, we know that $s_{\tau-(n)} = \pm s_\nu$ if and only if the length of $\nu$ is at most $l$ and $(\tau_1-n, \tau_2, \tau_3, \dotsc, \tau_l) + (l-1, l-2, \dotsc, 0)$ is a permutation of $(\nu_1, \nu_2, \dotsc, \nu_l)+(l-1, l-2, \dotsc, 0)$. This happens when there is an $i$ ($1\leq i\leq l$) such that
$$\tau_1-n+(l-1)=\nu_i+l-i,$$ 
$$\tau_j+(l-j)=\nu_{j-1}+(l-j+1), \hskip 2mm j=2,3,4,\dotsc, i,$$
$$\tau_j+(l-j)=\nu_j+(l-j), \hskip 2mm j=i+1, i+2, \dotsc, l,$$
which is equivalent to
$$\tau-(n) = \nu^{\dag i},$$
and when this happens,
$$s_{\tau-(n)}= (-1)^{i-1}s_\nu.$$
So the coefficient of $s_\nu$ in $(s_\lambda\# s_\mu)_{|\nu|}$ is
\begin{equation}
h_{\lambda,\mu}^\nu=\sum\limits_{i=1}^{l} (-1)^{i-1}\overline{h}_{\lambda,\mu}^{\nu^{\dag i}}.
\end{equation}
Take $n=3|\nu|-|\lambda|-|\mu|\geq |\overline{\lambda}|+|\overline{\mu}|+\lambda_2+\mu_2+3(|\nu|-|\lambda|)+2(|\lambda|-|\mu|)-|\lambda|$, since $l\leq |\tau|= n+|\nu|$, (5.4) can be written as
$$h_{\lambda,\mu}^\nu=\sum\limits_{i=1}^{4|\nu|-|\lambda|-|\mu|} (-1)^{i-1}\overline{h}_{\lambda,\mu}^{\nu^{\dag i}}.$$
\end{proof}

Now we use Theorem 5.1 to estimate when $\left\{h_{\lambda[n], \mu[n-r]}^{\nu[n+t]}\right\}_n$ stabilizes for given partitions $\lambda$, $\mu$, and $\nu$ and nonnegative integers $r$ and $t$. 
\begin{cor}
	\label{HeisenbergColumnStable}
	The sequence of Heisenberg coefficients $\left\{h_{\lambda[n], \mu[n-r]}^{\nu[n+t]}\right\}_{n\geq 0}$ stabilizes when $n \geq \frac{1}{2}(|\lambda|+|\mu|+|\nu|+\lambda_1+\mu_1+\nu_1-1)+r+t$.
\end{cor}

\begin{proof}
The Formula (5.2) gives us

\begin{equation}
\label{specialstable}
h_{\lambda[n], \mu[n-r]}^{\nu[n+t]}=\sum\limits_{i=1}^{2n+4t+r} (-1)^{i-1}\overline{h}_{\lambda[n],\mu[n-r]}^{{\nu[n+t]}^{\dag i}},
\end{equation}
So $h_{\lambda[n], \mu[n-r]}^{\nu[n+t]}$ reaches the stable value when $\overline{h}_{\lambda[n],\mu[n-r]}^{{\nu[n+t]}^{\dag i}}=0$ for all $i\geq 2$. By Theorem \ref{mainresult}, $\left\{(s_{\lambda[n]}\# s_{\mu[n-r]})_{n+t}\right\}_{n\geq 0}$ stabilizes at $n=|\lambda|+|\mu|+\lambda_1+\mu_1+3t+2r =:m$, so
$$\overline{h}_{\lambda[n],\mu[n-r]}^{{\nu[n+t]}^{\dag i}}=\overline{h}_{\lambda[m],\mu[m-r]}^{{\nu[n+t]}^{\dag i}+(m-n)}=h_{\lambda[m], \mu[m-r]}^{{\nu[n+t]}^{\dag i}+(m-n)}.$$
Since for $i\geq 2$, we have
\begin{equation*}
\begin{split}
{\nu[n+t]}^{\dag i}+(m-n)=&(\nu_{i-1}-i+1, n+t-|\nu|+1, \nu_1+1, \dotsc,\nu_{i-2}+1, \nu_{i}, \nu_{i+1}, \dotsc)\\
&\hskip 30mm +(m-n)\\
=&(\nu_{i-1}-i+1+m-n, n+t-|\nu|+1, \nu_1+1, \dotsc, \nu_{i-2}+1,\nu_{i}, \dotsc).
\end{split}
\end{equation*}
When $n\geq \frac{1}{2}(|\lambda|+|\mu|+|\nu|+\lambda_1+\mu_1+\nu_1-1)+t+r$, we have
$$n+t-|\nu|+1>\nu_{i-1}-i+1+m-n, \hskip 3mm \text{for all} \hskip 3mm i\geq 2.$$
So $h_{\lambda[m], \mu[m-r]}^{{\nu[n+t]}^{\dag i}+(m-n)}=0$ for all $i\geq 2$, which proves the corollary.
\end{proof}

We go back to Table 1 and compute the lower bound for the stabilization of each column using Corollary 5.2. We circle the number corresponding to those lower bounds. We can see that, in this case, the lower bounds are the places where the stabilizations of the Heisenberg coefficients begin, except for $h_{(n-2,1,1), (n-2,1)}^{(n-3,3)}$, $h_{(n-2,1,1), (n-2,1)}^{(n-3,2,1)}$, and $h_{(n-2,1,1), (n-2,1)}^{(n-4,1,1,1,1)}$.

\bibliographystyle{amsplain}
\bibliography{AguiarReference}

\end{document}